\begin{document}

\title{Some Conjectures on the Number of Primes\\in Certain Intervals}

\author{Adway Mitra\inst{1},Goutam Paul\inst{2},Ushnish Sarkar\inst{3}}
\institute{Department of Computer Science and Automation,\\
Indian Institute of Science, Bangalore 560 012, India.\\
\email{adway@csa.iisc.ernet.in}
\and
Department of Computer Science and Engineering,\\
Jadavpur University, Kolkata 700 032, India.\\
\email{goutam\_paul@cse.jdvu.ac.in}
\and
Department of Electronics and Communication Engineering,\\
St. Thomas' College of Engineering and Technology, Kolkata 700 023, India.\\
\email{ushnishsarkar@yahoo.com}
}

\maketitle

\begin{abstract}
In this paper, we make some conjectures on prime numbers that are sharper 
than those found in the current literature. First we
describe our studies on Legendre's Conjecture which is still
unsolved. Next, we show that Brocard's Conjecture can be proved assuming 
our improved version of Legendre's Conjecture. Finally, we sharpen the 
Bertrand's Postulate for prime numbers. Our results are backed by extensive 
empirical investigation.
\end{abstract} 

{\bf Keywords:} Bertrand's Postulate, Brocard's Conjecture, Legendre's 
Conjecture, Prime.

\section{Introduction}
Investigation on the properties of prime numbers is an interesting area of 
study for centuries. Starting from Euclid's result~\cite{euclid} on the 
infinititude of primes to modern primality testing algorithms~\cite{kranakis},
research on prime numbers has grown exponentially. There exist many 
results on the set of prime numbers. Some of them has been proved, such as 
Bertrand's Postulate~\cite{nagell}. Some of them has remained conjectures till 
today, and amongst them, some important ones are: Goldbach's 
Conjecture~\cite{guy}, Legendre's Conjecture~\cite{hardy}, Brocard's 
Conjecture~\cite{weisstein} etc. We concentrate on sharpening Legendre's 
Conjecture and Bertrand's Postulate in this paper. Both of these are related 
to the number of prime numbers in certain intervals.

In this paper, we propose four novel conjectures on prime numbers.
\begin{enumerate}
\item Our analysis of Legendre's Conjecture brings forth two results.
\begin{enumerate}
\item In Conjecture~\ref{improvelegend}, we sharpen Legendre's Conjecture on 
the lower bound on the number of primes in $[n^2, (n+1)^2]$.
\item In Proposition~\ref{brocard}, we prove Brocard's Conjecture using our 
Conjecture~\ref{improvelegend}.
\end{enumerate}
\item Our Conjecture~\ref{legendbound} improves the upper bound on the number 
of primes in $[n^2, (n+1)^2]$ as given by Rosser and Schoenfeld's Theorem.
\item Our third conjecture and its consequences are two-fold.
\begin{enumerate} 
\item In Conjecture~\ref{improvebertrand}, we generalize Bertrand's Postulate 
on the number of primes in $[n,kn]$ (Bertrand's Postulate deals with $k = 2$ 
only).
\item As an implication of the above generalization, in 
Corollary~\ref{ub_nth_prime}, we propose a stronger upper bound on 
$n$-th prime than what follows from Bertrand's Postulate. 
\end{enumerate}
\item Our fourth and final conjecture proposes an upper bound on the number of 
primes in $[n,kn]$. To our knowledge, this problem has not been attempted 
before this work.
\end{enumerate}

\section{Sharpening Legendre's Conjecture}

Legendre's Conjecture~\cite{hardy} is an important unsolved problem in the 
domain of prime numbers. It states that:

\begin{proposition}
\label{legend}
[Legendre's Conjecture]\\
For every positive integer $n$, there exists
at least one prime $p$ such that $n^{2}< p < (n+1)^{2}$.
\end{proposition}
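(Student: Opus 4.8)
The plan is to reduce Legendre's conjecture to a clean statement about gaps between consecutive primes. Suppose, toward a contradiction, that the open interval $(n^2,(n+1)^2)$ contains no prime. Let $p$ be the largest prime with $p \le n^2$ and $q$ the smallest prime with $q > n^2$; the assumed absence of primes in the interval forces $p \le n^2 < (n+1)^2 \le q$, so $p$ and $q$ are consecutive and their gap satisfies $q - p \ge (n+1)^2 - n^2 = 2n+1$. Since $p \le n^2$ gives $\sqrt{p} \le n$, this yields $q - p \ge 2\sqrt{p}+1$. Consequently it suffices to establish the prime-gap bound
\begin{equation}\label{eq:legendre-gap}
q - p < 2\sqrt{p} + 1 \qquad \text{for every pair of consecutive primes } p < q,
\end{equation}
from which the conjecture follows at once. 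The heuristic support is strong: by the Prime Number Theorem the expected number of primes in $(n^2,(n+1)^2)$ is about $(2n+1)/\log(n^2) \sim n/\log n \to \infty$, so the interval ought to contain not merely one prime but many.

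The natural route to \eqref{eq:legendre-gap} runs through analytic number theory. First I would invoke the explicit formula and zero-density estimates relating the count of primes in a short interval $(x, x+h]$ to the distribution of zeros of the Riemann zeta function, and then sharpen the conclusion by the sieve-theoretic machinery of Baker, Harman, and Pintz. The best unconditional results of this kind guarantee a prime in $(x, x+x^{\theta}]$ for all large $x$ with $\theta = 0.525$. Taking $x = n^2$, our target interval has length $2n+1 = 2\sqrt{x}+1$, with $\sqrt{x} = x^{1/2}$, so the argument would close the moment the admissible exponent $\theta$ could be pushed down to $1/2$.

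This is precisely where the main obstacle lies, and it is a formidable one. The exponent $0.525$ is tantalizingly close to $1/2$, yet no known technique eliminates the remaining margin: even under the Riemann Hypothesis one obtains only $q - p = O(\sqrt{p}\,\log p)$, and that stray logarithmic factor is by itself enough to put \eqref{eq:legendre-gap} out of reach. Removing it appears to demand either Cramér's conjecture $q - p = O((\log p)^2)$, which would imply \eqref{eq:legendre-gap} trivially but is itself wide open, or a genuinely new input to the short-interval problem. For this reason I would not claim an unconditional proof; instead, the sharper quantitative form recorded in the next section is what the empirical evidence invites us to conjecture.
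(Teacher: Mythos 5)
The paper offers no proof of this statement, and could not: despite the ``Proposition'' label, the surrounding text explicitly calls Legendre's Conjecture ``an important unsolved problem'' and notes that ``no formal proof has ever been found.'' The paper simply restates the conjecture, cites it, and supports it with empirical data (Table~1) before building further conjectures on top of it. Your proposal reaches exactly the right conclusion by declining to claim a proof; inventing a complete-looking argument here would have been the real mistake, so in the only sense that matters your assessment agrees with the paper's.

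The material you add beyond the paper is also sound. The reduction is correct: if $(n^2,(n+1)^2)$ contained no prime, then the consecutive primes $p \le n^2$ and $q \ge (n+1)^2$ would satisfy $q - p \ge (n+1)^2 - n^2 = 2n+1 \ge 2\sqrt{p}+1$, so the gap bound $q - p < 2\sqrt{p}+1$ for all consecutive primes implies the conjecture. (A trivial edge case: for $n=1$ there is no prime $p \le 1$, but $(1,4)$ contains $2$ and $3$, so the conjecture holds there directly.) Your survey of the obstruction is likewise accurate: the best unconditional short-interval result (exponent $0.525$) falls just short of the needed exponent $1/2$, and even the Riemann Hypothesis yields only $q - p = O(\sqrt{p}\,\log p)$, which misses the target by a logarithmic factor. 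So what you have is a correct conditional reduction plus an honest accounting of why it cannot currently be closed --- which is precisely the status this statement has in the paper itself, where it serves only as motivation for the authors' sharper Conjecture~\ref{improvelegend}.
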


Although no formal proof has ever been found for this theorem,
empirical works suggest that the number of primes in such intervals
is not one but more. Let the number of primes between $n^{2}$ and
$(n+1)^{2}$ be called $leg(n)$. The following table contains brief results.

\begin{table}[htb]
\begin{center}
\begin{tabular}{|c|r|r|r|r|r|r|r|r|r|r|}
\hline
$n$        & 1   & 2   & 3   & 4   & 5   & 6   & 7   & 8   & 9   & 10 \\
\hline
$leg(n)$   & 2   & 2   & 2   & 3   & 2   & 4   & 3   & 4   & 3   & 5 \\
\hline
\end{tabular}
\end{center}
\caption{No. of primes between $n^2$ and $(n+1)^2$.}
\label{tab1}
\end{table}

Our empirical observations show that the minimum value of $leg(n)$ is always 
2 or more. It is never 1, as mentioned in Legendre's Conjecture. 
We have checked this for many random large values of $n$ and find that $leg(n)$ 
oscillates aperiodically, but shows a general upward trend as $n$ increases. 
Thus, it is extremely unlikely that it would ever be 1. So, we strengthen
Legendre's Conjecture as follows.

\begin{conjecture}
\label{improvelegend}
[Our Improved version of Legendre's Conjecture]\\
For every positive integer $n$ there exists at least 2 primes $p$
and $q$ such that $n^{2}< p < q < (n+1)^{2}$.
\end{conjecture}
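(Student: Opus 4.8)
The plan is to reduce the statement to a lower bound on the number of primes in the interval and then to certify that this bound exceeds $1$. Writing $x = n^{2}$, the interval $[n^{2},(n+1)^{2}]$ has length $2n+1 \sim 2\sqrt{x}$, so by the Prime Number Theorem the heuristic density $1/\log t$ predicts
\[
  leg(n) = \pi((n+1)^{2}) - \pi(n^{2}) \;\approx\; \frac{2n+1}{\log(n^{2})} \;\sim\; \frac{n}{\log n} \;\longrightarrow\; \infty .
\]
This is the guiding intuition: not merely $2$ but arbitrarily many primes are expected, which is consistent with the upward drift of $leg(n)$ visible in Table~\ref{tab1}. The goal of a proof is to convert this average prediction into a guaranteed lower bound of at least $2$ valid for every $n$.

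First I would attempt the explicit-formula route: estimate the Chebyshev difference $\psi((n+1)^{2})-\psi(n^{2})$ and argue that if it exceeds $\log((n+1)^{2})$, then the interval must contain at least two prime powers, since a single prime power contributes at most $\log p \le \log((n+1)^{2})$. One then disposes of proper prime powers, of which there are $O(1)$ here (indeed no perfect square lies strictly between consecutive squares, so squares of primes never occur), recovering at least two honest primes. Equivalently, one could pursue a prime-gap route: if every gap between consecutive primes near $x$ is at most $g(x)$, then an interval of length $2n+1$ contains at least $\lceil (2n+1)/g(x)\rceil$ primes, so a bound of the shape $g(x) \lesssim n = \sqrt{x}$ would already force two primes. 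Either formulation converts the problem into a short-interval estimate for the primes.

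The hard part is that this short-interval barrier sits exactly at the frontier of what is unconditionally known, and in fact just beyond it. The strongest unconditional result, due to Baker, Harman and Pintz, only guarantees a prime in $[x,x+x^{0.525}]$; since the exponent $0.525$ exceeds $\tfrac{1}{2}$ while our interval has length $\sim 2x^{1/2}$, this already falls short of producing even one prime, let alone two. The conditional picture is no better: under the Riemann Hypothesis one has $\psi(x) = x + O(\sqrt{x}(\log x)^{2})$, whence
\[
  \psi((n+1)^{2}) - \psi(n^{2}) = (2n+1) + O\!\left(n(\log n)^{2}\right),
\]
and here the error term dominates the main term $2n+1$, so even RH cannot certify one prime in the interval. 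I therefore expect that no unconditional proof is attainable by present methods, since the two-prime statement is strictly stronger than Proposition~\ref{legend}, which is itself open; a genuine proof would require either Cram\'er-type gap bounds of size $O((\log x)^{2})$ or new short-interval results breaking the $x^{1/2}$ threshold, both well beyond current reach. Accordingly the assertion is best recorded as Conjecture~\ref{improvelegend}, with the empirical growth of $leg(n)$ serving as the supporting evidence.
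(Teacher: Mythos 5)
Your conclusion matches the paper's own treatment: the statement appears there purely as Conjecture~\ref{improvelegend}, with no proof offered, supported only by the data in Table~\ref{tab1} and spot checks at random large $n$, so your refusal to claim a theorem is exactly right. Where you genuinely differ is in how the conjecture is justified and situated. The paper's case is entirely observational ($leg(n)$ is never 1 in the data and trends upward, hence the strengthening is proposed); you instead give the Prime Number Theorem heuristic $leg(n) \approx (2n+1)/\log(n^{2}) \sim n/\log n \to \infty$, and, more valuably, a barrier analysis the paper never attempts: the interval has length $\sim 2\sqrt{x}$ while the best unconditional short-interval result (Baker--Harman--Pintz) requires length $x^{0.525}$, and even under RH the error term $O(\sqrt{x}\log^{2}x)$ in $\psi(x)$ swamps the main term $2n+1$, so neither one prime (Proposition~\ref{legend}, itself open) nor two can be certified by current methods. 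Your intermediate reductions are sound as far as they go: the $\psi$-difference criterion for two prime powers, and the observation that no prime square can lie strictly between consecutive squares, leaving only $O(1)$ higher prime powers to discard. In short, the paper's approach buys concrete numerical evidence for the specific constant 2, while yours buys an explanation of why the conjecture is plausible (the expected count diverges) together with a precise account of why it must remain a conjecture; the two are complementary rather than in conflict.
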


We can derive a formula for the upper bound of $leg(n)$
from a known result, namely, Rosser and Schoenfeld's Theorem~\cite{rosser}.

\begin{proposition}
\label{rosser}
[Rosser and Schoenfeld's Theorem]\\
For any positive integer $n (> 17)$, 
if $\Pi(n)$ is the number of primes less than or
equal to $n$, then $\frac{n}{ln(n)}\leq \Pi(n) \leq \frac{1.25n}{ln(n)}$.
\end{proposition}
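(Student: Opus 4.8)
The plan is to deduce the bounds on $\Pi(n)$ from sharp explicit estimates on the Chebyshev function $\theta(x) = \sum_{p \le x}\ln p$, to which $\Pi$ is tied by partial summation. Applying Abel summation to $\Pi(x) = \sum_{p \le x} 1$ with the weights $\ln p$ yields
\begin{equation}
\Pi(x) = \frac{\theta(x)}{\ln x} + \int_{2}^{x}\frac{\theta(t)}{t\,\ln^{2} t}\,dt ,
\label{eq:abel}
\end{equation}
so the entire problem reduces to showing that $\theta(x)$ stays close to $x$: an inequality of the shape $|\theta(x) - x| < \varepsilon(x)\,x$ with an explicit, eventually tiny $\varepsilon(x)$ feeds through \eqref{eq:abel} to pin $\Pi(x)$ between $x/\ln x$ and $1.25\,x/\ln x$ once $x$ is large, after which only finitely many $n$ just above $17$ remain.

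It is technically cleaner to bound the related function $\psi(x) = \sum_{p^{k}\le x}\ln p$ first, since the Riemann--von Mangoldt explicit formula
\begin{equation}
\psi(x) = x - \sum_{\rho}\frac{x^{\rho}}{\rho} - \frac{\zeta'(0)}{\zeta(0)} - \tfrac{1}{2}\ln\!\left(1 - x^{-2}\right)
\label{eq:explicit}
\end{equation}
expresses its deviation from $x$ purely in terms of the nontrivial zeros $\rho = \beta + i\gamma$ of the Riemann zeta function. I would then transfer back to $\theta$ via $\psi(x) = \theta(x) + \theta(x^{1/2}) + \theta(x^{1/3}) + \cdots$, whose tail contributes only $O(\sqrt{x})$, so that bounds on $\psi$ yield matching bounds on $\theta$ up to a controlled square-root error.

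The crux is bounding the zero sum in \eqref{eq:explicit} with small enough constants. For this I would combine three ingredients: a classical de la Vall\'ee Poussin zero-free region placing every $\rho$ strictly to the left of the line $\sigma = 1$, the Riemann--von Mangoldt counting estimate $N(T) \sim \frac{T}{2\pi}\ln\frac{T}{2\pi}$ for the number of zeros up to height $T$, and a numerical verification of the Riemann Hypothesis up to some explicit height $T_{0}$, so that all low-lying zeros satisfy $\beta = \tfrac12$ exactly. Splitting the sum at $T_{0}$, the verified part contributes a term of size $O(\sqrt{x}\,\log^{2} x)$ and the tail is damped by the zero-free region; optimizing the cut-off and the region's parameters produces $|\psi(x) - x| < \varepsilon(x)\,x$ with the required numerical strength.

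The main obstacle is exactly this optimization. The elementary Chebyshev argument alone produces constants near $\ln 2 \approx 0.69$ on the lower side and $\ln 4 \approx 1.39$ on the upper side, neither of which reaches the target constants $1$ and $1.25$. Squeezing the upper constant down to $1.25$ uniformly for all $n \ge 17$ genuinely requires the analytic input above, together with careful tracking of every explicit constant through \eqref{eq:explicit} and \eqref{eq:abel}; the narrow range of $n$ slightly above $17$, where the asymptotics have not yet taken hold, would be closed by direct computation of $\Pi(n)$.
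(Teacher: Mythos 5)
The first thing to note is that the paper does not prove this proposition at all: it is quoted as a known theorem with a citation to Rosser and Schoenfeld's 1962 paper, and then used as a black box to derive Corollary~\ref{ub_legn}. So there is no internal proof to compare against; your sketch is, in spirit, a summary of how Rosser and Schoenfeld themselves argue (explicit formula for $\psi$, transfer to $\theta$ and then to $\Pi$ by partial summation, a de la Vall\'ee Poussin zero-free region combined with numerical verification of low-lying zeros). In that sense you have identified the right family of techniques. But what you have written is a research plan, not a proof: the entire difficulty of the theorem lives in the step you defer --- ``optimizing the cut-off and the region's parameters'' until the constants reach the required numerical strength --- and without carrying that out nothing is established. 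Your own remark that elementary Chebyshev arguments only yield constants near $\ln 2$ and $\ln 4$ shows exactly how large the remaining gap is.

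There is also a more fundamental obstruction: the statement as printed is false, so no strategy can prove it. Rosser and Schoenfeld's actual upper bound is $\Pi(n) < \frac{1.25506\,n}{\ln n}$, and the constant $1.25506$ is not an artifact of their method; it is forced by $n = 113$, where $\Pi(113) = 30$ while $\frac{1.25 \cdot 113}{\ln 113} \approx 29.88 < 30$. Thus the inequality with constant $1.25$ fails at $n = 113 > 17$. Had you pushed your plan through, you would have collided with this at precisely the stage you describe as ``the narrow range of $n$ slightly above $17$ \ldots closed by direct computation'': the direct computation refutes the claim rather than confirming it. The honest conclusion of your approach is that the proposition needs its constant corrected to $1.25506$ (or its range of validity restricted), after which your outline does match the genuine proof in the literature; as stated, the proposition is unprovable.
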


\begin{corollary}
\label{ub_legn}
[Upper Bound on $leg(n)$]\\
$leg(n)\leq \frac{n^{2}+10n+5}{8ln(n)}$.
\end{corollary}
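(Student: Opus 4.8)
The plan is to write $leg(n)$ as a difference of values of the prime-counting function and then bound each value using Proposition~\ref{rosser}. First I would note that neither $n^{2}$ nor $(n+1)^{2}$ is prime (both are perfect squares), so the number of primes strictly between them coincides with the number of primes up to $(n+1)^{2}$ minus those up to $n^{2}$; that is, $leg(n) = \Pi((n+1)^{2}) - \Pi(n^{2})$.

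Next I would bound this difference in the direction that makes $leg(n)$ largest: apply the upper estimate of Rosser and Schoenfeld's Theorem to the larger argument and the lower estimate to the smaller one, obtaining
\[
leg(n) \le \frac{1.25\,(n+1)^{2}}{ln\big((n+1)^{2}\big)} - \frac{n^{2}}{ln(n^{2})}.
\]
The key simplification is the observation that $ln\big((n+1)^{2}\big) = 2\,ln(n+1) > 2\,ln(n) = ln(n^{2})$, so replacing the denominator of the first term by the smaller quantity $2\,ln(n)$ only enlarges the right-hand side. Both terms then share the denominator $2\,ln(n)$, and collecting numerators gives $1.25\,(n+1)^{2} - n^{2} = \tfrac14 n^{2} + \tfrac52 n + \tfrac54$. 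Dividing by $2\,ln(n)$ and clearing fractions produces exactly $\dfrac{n^{2}+10n+5}{8\,ln(n)}$, as claimed.

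The arithmetic is routine, so the only point demanding care is the range of validity. Since Proposition~\ref{rosser} requires its argument to exceed $17$, applying it to $n^{2}$ is legitimate only once $n \ge 5$ (as $4^{2}=16<17$), and the case $n=1$ must be excluded anyway because $ln(1)=0$. I would therefore present the displayed chain of inequalities for $n \ge 5$ and dispose of the remaining cases $n \in \{2,3,4\}$ by checking the tabulated values in Table~\ref{tab1} directly against the formula. I do not anticipate a genuine obstacle here; the whole statement is a mechanical consequence of Proposition~\ref{rosser}, and the only thing that can go wrong is selecting the wrong side of a bound, which would reverse the inequality's direction.
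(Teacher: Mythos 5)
Your proposal is correct and follows essentially the same route as the paper: apply the Rosser--Schoenfeld upper bound to $\Pi((n+1)^{2})$ and the lower bound to $\Pi(n^{2})$, replace $\ln(n+1)$ by $\ln(n)$ in the favorable direction, and subtract to get $\frac{n^{2}+10n+5}{8\ln(n)}$. Your attention to the range of validity (the $n>17$ hypothesis, the $n=1$ degeneracy, and checking $n\in\{2,3,4\}$ against Table~\ref{tab1}) is a point of care the paper's own proof silently omits.
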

\begin{proof}
From Theorem~\ref{rosser}, we know that 
$\Pi((n+1)^{2})\leq \frac{5(n+1)^{2}}{8ln(n)}$ and
$\Pi(n^{2})\geq \frac{n^{2}}{2ln(n)}$.
Subtracting, we have $leg(n)\leq \frac{n^{2}+10n+5}{8ln(n)}$. \qed
\end{proof}

Thus, we have an upper bound on $leg(n)$. However, we observe that
this theoretical upper limit is extremely loose. This is illustrated
in Table~\ref{tab2}. We here propose a tighter bound. By plotting 
the values of $leg(n)$ against consecutive integers $n$, we see a
general upward tendency in the curve. Studying the general nature of
the curve, we suggest the following conjecture.

\begin{conjecture}
\label{legendbound}
[Our Improved Bound on $leg(n)$]\\
For any positive integer $n$,
$\frac{n^{2}+10n+5}{3nln(n)} \leq leg(n) \leq\frac{n^{2}+10n+5}{3n}$.
\end{conjecture}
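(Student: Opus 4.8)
The plan is to write $leg(n) = \Pi((n+1)^2) - \Pi(n^2)$ and to attack the two inequalities by completely different routes, since they are not of comparable difficulty. The right-hand side of the upper bound behaves like $n/3$ and that of the lower bound like $n/(3\ln n)$, whereas the prime number theorem predicts $leg(n) \sim (2n+1)/(2\ln n) \sim n/\ln n$. Thus the upper bound carries a safety factor of roughly $3/\ln n$ (genuine slack once $\ln n > 3$) and the lower bound a safety factor of roughly $3$; asymptotically both should hold, but only the upper one looks provable.

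For the upper bound I would \emph{not} start from Corollary~\ref{ub_legn}: that estimate is of order $n^2/\ln n$, loose by a whole factor of $n$, precisely because the constant $1.25$ in Proposition~\ref{rosser} is applied to $\Pi((n+1)^2)$ but the constant $1$ to $\Pi(n^2)$, so the leading $n^2$ terms fail to cancel in the subtraction. Instead I would use a prime-counting estimate whose main terms match on both ends, e.g.\ Dusart-type bounds of the form $\Pi(x) = \frac{x}{\ln x}\bigl(1 + O(1/\ln x)\bigr)$ with explicit constants. With matched main terms the $n^2$ contributions cancel, leaving $leg(n) \le \frac{n}{\ln n} + (\text{lower-order terms})$, which is below $\frac{n^2+10n+5}{3n} = \frac{n}{3} + \frac{10}{3} + \frac{5}{3n}$ for every $n$ past some explicit threshold $N_0$. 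The finitely many $n < N_0$ are then verified directly by sieving, as in Table~\ref{tab1}. I expect this half to go through routinely.

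The lower bound is the real content, and this is where I expect to get stuck. It asserts a lower bound of the \emph{correct order of magnitude}, namely $\gg n/\ln n$, on the number of primes in the short interval $[n^2,(n+1)^2]$, whose length is only $2n+1 \sim 2\sqrt{x}$ with $x=n^2$. This is far stronger than Legendre's Conjecture (Proposition~\ref{legend}), which asks for a single prime and is itself open, and stronger than Conjecture~\ref{improvelegend}, which promises only two. No unconditional method reaches it: the sharpest short-interval results (Baker--Harman--Pintz) guarantee a prime in $[x, x+x^{0.525}]$, just missing the exponent $1/2$ that this interval has, and even the Riemann Hypothesis is insufficient, since it controls gaps only up to size $\sqrt{x}\log x$ --- longer than our window --- so that not even a single prime is guaranteed under RH. The best I could honestly offer is the heuristic from the prime number theorem that the expected count is $\sim n/\ln n$, which comfortably accounts for the factor-$3$ cushion, together with the numerical evidence.

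Finally, I would flag a genuine defect that any proof attempt must confront: for small $n$ the proposed lower bound exceeds $leg(n)$, and even exceeds the proposed upper bound. At $n=2$ it reads $29/(6\ln 2) \approx 6.97$, against $leg(2)=2$ and an upper bound of $29/6 \approx 4.83$; it fails likewise at $n=3$ and $n=5$, because $\ln n$ is small (indeed $\ln n < 1$ at $n=2$, which forces the lower bound above the upper bound) while the additive $10n+5$ term dominates. Any correct version therefore needs the hypothesis ``for all sufficiently large $n$'' or an adjusted constant, so the entire difficulty concentrates in the asymptotic lower bound, which is equivalent to a short-interval prime count lying beyond the reach of current technology.
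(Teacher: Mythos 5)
There is no proof in the paper for you to match: Conjecture~\ref{legendbound} is offered purely as a conjecture, supported only by curve-fitting remarks and the numerical data of Table~\ref{tab2}, which begins at $n=10$. So the right comparison is between your assessment of provability and the paper's empirical defense, and on that score your analysis is sound and in several places sharper than the paper's own discussion. Your diagnosis of why Corollary~\ref{ub_legn} is loose (the mismatched constants $1.25$ and $1$ applied to $\Pi((n+1)^2)$ and $\Pi(n^2)$ prevent the $n^2$ main terms from cancelling, leaving an error of order $n^2/\ln n$) is exactly right, as is your judgment that the upper half of the conjecture is within reach of prime-counting estimates with matched main terms, while the lower half --- a bound of order $n/\ln n$ on the number of primes in an interval of length $2n+1 \approx 2\sqrt{x}$ with $x = n^2$ --- lies beyond Baker--Harman--Pintz (exponent $0.525$) and even beyond RH (gap bound of order $\sqrt{x}\ln x$, longer than the window), and is strictly stronger than the still-open Legendre's Conjecture (Proposition~\ref{legend}) and Conjecture~\ref{improvelegend}.

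Your final observation is the most valuable one, and it identifies a genuine error in the statement itself that the paper misses precisely because Table~\ref{tab2} starts at $n=10$: as literally stated, ``for any positive integer $n$,'' the conjecture is false. At $n=1$ the bounds are undefined ($\ln 1 = 0$); at $n=2$ the lower bound is $29/(6\ln 2)\approx 6.97$ while $leg(2)=2$ and the upper bound is $29/6\approx 4.83$, so the two claimed bounds are even inconsistent with each other; and the lower bound also fails at $n=3,4,5$ and marginally at $n=7$, where it evaluates to about $3.03$ against $leg(7)=3$. The statement therefore needs at least the hypothesis $n\geq 8$, or a weakened constant, before it can even be entertained; after that correction its entire content is an asymptotic lower bound on primes in short intervals that, as you say, no current technique (conditional or unconditional) can deliver. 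In short, your proposal correctly concludes that no proof is available here, and indeed the paper offers none --- only numerical evidence on a range of $n$ that happens to avoid the counterexamples you found.
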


The data in Table~\ref{tab2} supports this conjecture.

\begin{table}[htb]
\begin{center}
\begin{tabular}{|r|r|r|r|r|}
\hline
$n$ & $leg(n)$ & $\substack{\mbox{Upper Bound}\\ 
\mbox{from Corollary~\ref{ub_legn}}}$ & \multicolumn{2}{r|}{$\substack{
\mbox{Our Bounds}\\ \mbox{from Conjecture~\ref{legendbound}:} \\
\mbox{Lower\ \ \ \ \ \ \ \ Upper}}$} \\
\hline
10 & 5 & 11.1 & 3.0 & 6.8 \\
\hline
20 & 7 & 25.2 & 3.4 & 10.1 \\
\hline
50 & 11 & 96.0 & 5.1 & 20.0 \\
\hline
100 & 23 & 298.7 & 8.0 & 36.7 \\
\hline
500 & 71 & 5129.1 & 27.3 & 170.0\\
\hline
1000 & 152 & 18276.6 & 48.7 & 336.7 \\
\hline
2000 & 267 & 66110.7 & 88.2 & 670.0 \\
\hline
5000 & 613 & 367638.8 & 196.1 & 1670.0 \\
\hline
20000 & 2020 & 5051250.9 & 673.5 & 6670.0 \\
\hline
45000 & 4218 & 23629958.8 & 1400.3 & 15003.3 \\
\hline
\end{tabular}
\end{center}
\caption{Comparing the bounds on $leg(n)$ from Rosser and Schoenfeld's Theorem
and our Conjecture~\ref{legendbound}.}
\label{tab2}
\end{table}

Moreover, we can prove Brocard's Conjecture~\cite{weisstein} using our 
Conjecture~\ref{improvelegend}.

\begin{proposition}
\label{brocard}
[Brocard's Conjecture]\\
Between $p_n^2$ and $p_{n+1}^2$, there exist at least 4 prime numbers, 
where $p_{n}$ is the $n$-th prime number.
\end{proposition}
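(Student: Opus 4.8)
The plan is to partition the open interval $(p_n^2, p_{n+1}^2)$ into consecutive Legendre blocks of the form $(m^2, (m+1)^2)$ and to invoke Conjecture~\ref{improvelegend} on each block. For integers $m$ with $p_n \le m < p_{n+1}$, the intervals $(m^2,(m+1)^2)$ tile $(p_n^2, p_{n+1}^2)$, and the shared endpoints $m^2$ are perfect squares, hence composite for $m > 1$; thus they contribute no primes and cause no double counting. First I would fix $n \ge 2$ and observe that both $p_n$ and $p_{n+1}$ are then odd primes, so their difference $p_{n+1}-p_n$ is even and therefore at least $2$.

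This gap estimate is the crux: it guarantees that the two consecutive blocks $(p_n^2,(p_n+1)^2)$ and $((p_n+1)^2,(p_n+2)^2)$ are both contained in $[p_n^2, p_{n+1}^2]$, since $p_n+2 \le p_{n+1}$ forces $(p_n+2)^2 \le p_{n+1}^2$. Next I would apply Conjecture~\ref{improvelegend} to each of these blocks, obtaining at least two primes strictly inside each. Because the blocks are separated by the composite number $(p_n+1)^2$, the two pairs of primes are distinct, yielding at least $4$ primes strictly between $p_n^2$ and $p_{n+1}^2$, as required. (More generally, the $p_{n+1}-p_n \ge 2$ blocks supply at least $2(p_{n+1}-p_n) \ge 4$ primes, but the two leftmost blocks already suffice.)

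The main obstacle is not the counting, which becomes routine once the improved Legendre bound is granted, but the boundary case $n=1$. Here $p_1 = 2$ and $p_2 = 3$, so the interval $(4,9)$ contains only the primes $5$ and $7$, and the conclusion fails; this is the well-known exception to Brocard's Conjecture, and the argument above breaks precisely because $p_2 - p_1 = 1 < 2$ leaves room for only a single Legendre block. I would therefore establish the result for $n \ge 2$, disposing of $n = 1$ by hand as the sole exception, and I would take care when writing the details that the primes supplied by the two blocks are counted with strict inequalities inside $(p_n^2, p_{n+1}^2)$ and are not inadvertently identified with one another or with the block endpoints.
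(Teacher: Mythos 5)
Your proof is correct (conditional on Conjecture~\ref{improvelegend}) and follows essentially the same route as the paper: apply the improved Legendre conjecture to two disjoint Legendre blocks inside $[p_n^2, p_{n+1}^2]$, using $p_{n+1}-p_n \ge 2$ to ensure both blocks fit (the paper uses the first and last blocks, you use the first two --- an immaterial difference). You are in fact more careful than the paper, whose assertion that ``the minimum prime gap is 2'' fails for $p_1=2$, $p_2=3$; your explicit isolation of the $n=1$ exception, where $(4,9)$ contains only the two primes $5$ and $7$, repairs a genuine oversight in the paper's own argument.
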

\begin{proof}
From our Conjecture~\ref{improvelegend}, we have at least
2 primes between $p_n^2$ and $(p_n+1)^2$, and also 2
primes between $(p_{n+1}-1)^2$ and $p_{n+1}^2$. As the minimum
prime gap is 2, we have $p_{n+1}-p_n\geq 2$ Hence, we have
$p_{n+1}-1 \geq p_n+1$. And hence, $(p_{n+1}-1)^2 \geq
(p_n+1)^2$. So, the two intervals mentioned above are disjoint.
Hence, the number of primes between $p_n^2$ and
$p_{n+1}^2$ is at least 4. \qed
\end{proof}

\section{Generalization of Bertrand's Postulate}

Bertrand's Postulate~\cite{nagell} is an important theorem about the 
distribution of prime numbers. 

\begin{proposition}
\label{bertrand}
[Bertrand's Postulate]\\
For every positive integer $n>1$ there exists at least one prime
$p$ such that $n \leq p < 2n$.
\end{proposition}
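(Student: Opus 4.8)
The plan is to give Erdős's elementary proof via the central binomial coefficient $\binom{2n}{n}$, arguing by contradiction: if no prime lies in $[n,2n)$, then the prime factorisation of $\binom{2n}{n}$ is too sparse to account for its known size. First I would record the lower bound $\binom{2n}{n}\ge \frac{4^n}{2n+1}$, which holds because $\binom{2n}{n}$ is the largest of the $2n+1$ terms in the expansion $\sum_{k}\binom{2n}{k}=4^n$. Next I would analyse the exponent $e_p$ of a prime $p$ in $\binom{2n}{n}$: Legendre's formula gives $e_p=\sum_{i\ge 1}\left(\lfloor 2n/p^i\rfloor-2\lfloor n/p^i\rfloor\right)$, and since each summand is $0$ or $1$ and vanishes once $p^i>2n$, we get $p^{e_p}\le 2n$ and, in particular, $e_p\le 1$ whenever $p>\sqrt{2n}$.

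The crucial ingredient is a non-divisibility lemma: for $\frac{2n}{3}<p\le n$ (with $n\ge 3$) the prime $p$ does not divide $\binom{2n}{n}$ at all, because then $\lfloor 2n/p\rfloor=2$, $\lfloor n/p\rfloor=1$, and higher powers of $p$ already exceed $2n$, so $e_p=0$. I would also need the primorial estimate $\prod_{p\le x}p\le 4^{x}$, which I would prove by induction on $x$, splitting at an odd integer $2m+1$ and bounding the product of the primes in $(m+1,2m+1]$ by the middle coefficient $\binom{2m+1}{m}\le 4^{m}$.

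Assuming for contradiction that no prime lies in $[n,2n)$, the only primes dividing $\binom{2n}{n}$ are those with $p\le\frac{2n}{3}$. Splitting these into the primes $p\le\sqrt{2n}$ (each contributing a factor at most $2n$, and numbering at most $\sqrt{2n}$) and the primes $\sqrt{2n}<p\le\frac{2n}{3}$ (each appearing to the first power, with product at most $4^{2n/3}$), I obtain $\binom{2n}{n}\le (2n)^{\sqrt{2n}}\,4^{2n/3}$. Combining this with the lower bound yields $\frac{4^n}{2n+1}\le (2n)^{\sqrt{2n}}\,4^{2n/3}$, that is, $4^{n/3}\le (2n+1)\,(2n)^{\sqrt{2n}}$. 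Taking logarithms, the left-hand side grows linearly in $n$ while the right-hand side grows only like $\sqrt{n}\,\log n$, so the inequality fails once $n$ exceeds an explicit threshold (one can take $n\ge 468$), which is the desired contradiction.

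The main obstacle is calibrating the final numerical inequality so that it kicks in at a manageably small threshold, and then closing the remaining finite gap: I would finish by exhibiting a short chain of primes $2,3,5,7,13,23,43,83,163,317,631$ in which each term is smaller than twice its predecessor, so that for every $n$ below the threshold the interval $[n,2n)$ contains one of them. The estimates on $e_p$, the non-divisibility lemma, and the primorial bound are the technical heart of the argument; the remaining steps are essentially bookkeeping.
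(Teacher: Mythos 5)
The paper offers no proof of this statement at all: Bertrand's Postulate appears there as Proposition~\ref{bertrand}, quoted as a known theorem with a citation to Nagell~\cite{nagell}, and is used only as a baseline for the conjectures that follow. Your proposal is therefore not an alternative to an in-paper argument but a self-contained proof, namely the classical Erd\H{o}s argument, and its outline is correct. The four ingredients you isolate --- the lower bound $\binom{2n}{n}\ge 4^n/(2n+1)$, the bound $p^{e_p}\le 2n$ from Legendre's formula (hence $e_p\le 1$ for $p>\sqrt{2n}$), the vanishing of $e_p$ for $2n/3<p\le n$, and the primorial estimate $\prod_{p\le x}p\le 4^x$ --- combine exactly as you say: if $[n,2n)$ contained no prime, every prime factor of $\binom{2n}{n}$ would be at most $2n/3$, giving $4^{n/3}\le(2n+1)(2n)^{\sqrt{2n}}$, which fails for all sufficiently large $n$. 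Your threshold $468$ is about right for this calibration (many textbook expositions settle for a lazier $n\ge 4000$), and the Landau chain $2,3,5,7,13,23,43,83,163,317,631$, in which each prime is less than twice its predecessor, settles every $n$ up to $630$. Two details deserve explicit treatment in a full write-up: first, the claim ``fails once $n$ exceeds a threshold'' needs the easy monotonicity argument showing that once $4^{n/3}$ overtakes $(2n+1)(2n)^{\sqrt{2n}}$ it stays ahead, since that is what the finite check relies on; second, the non-divisibility lemma needs the small cases $n=3,4$ verified separately, because the generic estimate $p^2>4n^2/9\ge 2n$ only kicks in at $n\ge 5$. Neither is a gap in the idea, just bookkeeping to pin down. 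It is also worth noting that what this paper actually contributes around Bertrand's Postulate is conjectural sharpenings (Conjecture~\ref{improvebertrand} and Conjecture~\ref{ub_numprimes}) supported by computation, not proofs, so your argument is logically independent of everything the paper asserts.
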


In~\cite{nagura} it has been shown that for every $n>25$, there exists at
least 1 prime $p$ such that $n\leq p \leq \frac{6n}{5}$.

Inspired by this result, we investigated whether there exists 
at least $k-1$ primes between $n$ and $kn$ for any integer $n$.
This is a natural generalization of Bertrand's Postulate. We experimented 
with $k$ from 2 up to 1000000, and found that for any integer $n \geq a$,
\begin{enumerate}
\item there are at least $k-1$ primes between $n$ and $kn$ and 
\item the number of primes in such intervals keep on increasing almost 
monotonically as $n$ increases.
\end{enumerate}
The number $a$ is a threshold that slowly
increases as $n$ increases. The empirical results suggest that the variation in
$a$ with $k$ is given by 
$$a = \lceil 1.1\ln(2.5k) \rceil.$$

The variation of the threshold value $a$ is shown in Table~\ref{tab3} below 
for different values of $k$. The tabulated values of $k$  are those at
which the value of $a$ changes.

\begin{table}[htb]
\begin{center}
\begin{tabular}{|r|r|r|r|r|r|r|r|r|r|r|}
\hline
$k$ & 2 & 5 & 22 & 65 & 160 & 427 & 1020 & 200000 & 1000000 \\
\hline
$\substack{\mbox{Actual}\\ \mbox{Threshold}}$ & 2 & 3 & 5 & 6 & 7 & 8 & 9 & 14 & 16\\
\hline
$1.1\ln(2.5k)$ & 1.77 & 2.21 & 4.40 & 5.59 & 6.27 & 7.67 & 8.62 & 14.43 &
16.20 \\
\hline 
$\substack{\mbox{Our}\\ \mbox{Estimate}\\a}$ & 2 & 3 & 5 & 6 & 7 & 8 & 9 & 15 & 17\\
\hline
\end{tabular}
\end{center}
\caption{The threshold $a$ for different values of $k$.}
\label{tab3}
\end{table}

Based on the above results, we formally state our conjecture as below:
\begin{conjecture}
\label{improvebertrand}
[Our Generalization of Bertrand's Postulate]
For any integers $n$, $a$ and $k$, where $a = \lceil 1.1\ln(2.5k) \rceil$,
there are at least $k-1$ primes between $n$ and $kn$ when $n \geq a$.
\end{conjecture}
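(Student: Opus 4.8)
The plan is to convert the conjecture into an explicit analytic inequality by means of effective prime-counting bounds, and then to verify that inequality on its worst-case boundary $n = a$ together with a finite computational check of the small cases. Writing $\pi$ for the prime-counting function, the number of primes in $[n, kn]$ is $\pi(kn) - \pi(n-1) \ge \pi(kn) - \pi(n)$, so it suffices to prove $\pi(kn) - \pi(n) \ge k-1$ whenever $n \ge a = \lceil 1.1\ln(2.5k)\rceil$. First I would replace both terms by effective estimates: a lower bound on $\pi(kn)$ and an upper bound on $\pi(n)$. The Rosser--Schoenfeld bounds of Proposition~\ref{rosser} are available, but their $25\%$ slack in the upper constant is almost certainly too crude here (see below), so I would instead use sharper Dusart-type bounds of the form $\pi(x) \ge \frac{x}{\ln x}\bigl(1 + \tfrac{c_1}{\ln x}\bigr)$ and $\pi(x) \le \frac{x}{\ln x}\bigl(1 + \tfrac{c_2}{\ln x}\bigr)$, valid for $x$ beyond an explicit threshold, whose relative error tends to $0$.

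Substituting these gives a two-variable lower bound $\pi(kn)-\pi(n) \ge f(n,k)$ for an explicit elementary function $f$. For fixed $k \ge 2$ one checks that $f(\cdot,k)$ is eventually increasing in $n$ (its derivative behaves like $\tfrac{k-1}{\ln n}$), so the inequality $f(n,k) \ge k-1$ is tightest at the left endpoint $n = a(k)$. The problem thus reduces to the single-variable claim $f\bigl(a(k),k\bigr) \ge k-1$ for all integers $k$. Here the dominant contribution to $f(a,k)$ is of order $\frac{(k-1)a}{\ln(ka)}$; with $a = \lceil 1.1\ln(2.5k)\rceil$ and $ka \approx 1.1\,k\ln(2.5k)$ this sits just above $k-1$, the factor $1.1$ together with the upward rounding in the ceiling jointly supplying the surplus against the sub-logarithmic losses. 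Finally, the effective bounds hold only for $x$ above some absolute $N_0$; but $a(k) \le N_0$ forces $k$ into a bounded range, so the remaining pairs $(n,k)$ with $a(k) \le n \le N_0$ are finite in number and can be dispatched by direct computation, consistent with Table~\ref{tab3}.

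The main obstacle is the thinness of the margin at the boundary. The threshold $a = \lceil 1.1\ln(2.5k)\rceil$ is evidently calibrated so that the leading term of $f(a,k)$ barely clears $k-1$, and that surplus is eroded by the $\ln\ln k$-sized correction hidden in $\ln a$ inside $\ln(ka) = \ln k + \ln a$. For intermediate $k$ this correction is a non-negligible fraction of $\ln k$, so the comfortable asymptotic margin has not yet taken hold; indeed, for such $k$ the \emph{unrounded} value $1.1\ln(2.5k)$ alone would not suffice, and it is the ceiling that rescues the bound. Making the boundary inequality rigorous therefore demands (i) prime-counting bounds whose error terms are provably smaller than this shrinking surplus --- ruling out the loose Rosser--Schoenfeld constant and forcing the sharpest available explicit estimates --- and (ii) uniform control of the $\ln(ka)$ term across all $k$, with the genuinely delicate intermediate range handled by an explicit case split backed by verified computation. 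It is precisely this razor-thin, empirically tuned margin that keeps the statement a conjecture rather than a theorem.
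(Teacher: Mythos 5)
First, a category mismatch you should be aware of: the statement you were asked to prove is one of the paper's \emph{conjectures}, and the paper offers no proof of it at all --- its only support is the empirical evidence of Table~\ref{tab3}, namely experiments for $k$ up to $10^{6}$ from which the formula $a=\lceil 1.1\ln(2.5k)\rceil$ was curve-fitted. So there is no proof in the paper to compare yours against, and your closing admission that the statement ``remains a conjecture'' is in fact the paper's own position. Judged as a proof attempt on its own terms, your proposal has the right skeleton (pass to $\pi(kn)-\pi(n)$, use effective prime-counting bounds, reduce to the left endpoint $n=a(k)$ by monotonicity of $f(\cdot,k)$, and your observation that the Rosser--Schoenfeld constant $1.25$ of Proposition~\ref{rosser} is too crude is correct), but it contains two genuine gaps. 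The first and decisive one: the single inequality into which everything funnels, $f(a(k),k)\ge k-1$ for \emph{every} integer $k$, is never established. You assert the dominant term ``sits just above $k-1$'' and delegate the delicate intermediate range of $k$ to ``an explicit case split backed by verified computation'' that is neither specified nor performed. Since that inequality is exactly where the difficulty of the conjecture lives, the proposal is a plan for a proof rather than a proof.

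The second gap is that the finite-check escape hatch does not work as you stated it. Your exceptional set is $\{(n,k): a(k)\le n\le N_{0}\}$, where $N_{0}$ is the validity threshold of the effective bounds; it is indeed finite, because $a(k)\le N_{0}$ forces $k\le \frac{2}{5}e^{N_{0}/1.1}$, but that is precisely the problem: Dusart-type bounds of the strength you need carry thresholds $N_{0}$ in the hundreds or thousands, so the number of exceptional $k$ is of order $e^{N_{0}/1.1}$ --- finite but astronomically beyond any ``direct computation.'' (There is a real tension here: bounds with small thresholds, like Rosser--Schoenfeld's, are too weak for the boundary inequality, while bounds strong enough for it have thresholds that explode your case count.) To make the case split feasible you must restructure it: for small $n$ apply the analytic lower bound only to $\pi(kn)$, which is valid as soon as $kn\ge N_{0}$, and use exact values of $\pi(n)$, so that for each of the at most $N_{0}$ small values of $n$ the requirement $\pi(kn)\ge k-1+\pi(n)$ is again an analytic inequality in $k$. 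Finally, a smaller inaccuracy: your claim that the unrounded value $1.1\ln(2.5k)$ ``would not suffice'' and that the ceiling rescues the bound appears to be wrong. Using a second-order estimate of quality $\pi(x)\ge x/(\ln x-1)$, the worst case occurs near $k\approx e^{9}$ (where $0.1u+2.008-\ln(1.1u+1.008)$, $u=\ln k$, is minimized) and the surplus there is roughly five percent even without rounding --- thin, but not razor-thin. Quantifying exactly this margin, uniformly in $k$, is the work a proof would have to do, and it is the work the proposal leaves undone.
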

Here we have shown the results for certain selected values of $a$ and $k$ just
as illustration, however, but we have covered all intermediate
values in our experiments.

We also observe that, the actual number of primes in the interval
$[n,kn]$ actually increases far beyond $k-1$ as $n$ increases.
When $k=2$ (Bertrand's Postulate) it can be derived~\cite{apostol} from Prime
Number Theorem that the number of primes in the said interval is
roughly $\frac{n}{ln(n)}$, within some error limits.

The known upper bound for the $n$-th prime $p_n$ is $2^n$, i.e. $p_n < 2^n$.
This follows from Proposition~\ref{bertrand}. Using 
Conjecture~\ref{improvebertrand}, we can provide a stronger upper bound.
\begin{corollary}
\label{ub_nth_prime}
[Our Upper Bound on $n$-th Prime]\\
$p_n < 2^a (n-a)$ for any positive integer $a$, and the tightest bound is 
given by $a = \alpha+1$,
where $\alpha$ is the least positive integral solution of the inequality
$2^x > 1.1 \ln (2.5(n-x))$.
\label{ubound_nth_prime}
\end{corollary}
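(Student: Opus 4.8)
The plan is to match the two factors of the claimed bound $2^a(n-a)$ to two separate mechanisms: read $2^a$ as the point reached after $a$ successive doublings (the classical consequence of Bertrand's Postulate) and read $(n-a)$ as the multiplier $k$ in a single interval $[m,km]$ to which Conjecture~\ref{improvebertrand} is applied. Thus the first $a$ or so primes are accounted for by iterating Proposition~\ref{bertrand}, and the remaining primes up to index $n$ are harvested from one long interval starting at $2^a$.

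First I would record the elementary bound $p_j \le 2^j$, which follows from Proposition~\ref{bertrand} by induction ($p_1=2$ and $p_{j+1}<2p_j$); in particular the first $a$ primes $p_1,\dots,p_a$ all lie in $[2,2^a]$, so at least $a$ primes do not exceed $2^a$. Next I would invoke Conjecture~\ref{improvebertrand} with multiplier $k=n-a$ and left endpoint $m=2^a$: provided $2^a \ge \lceil 1.1\ln(2.5(n-a))\rceil$, the interval $[2^a,\,2^a(n-a)]$ contains at least $k-1=n-a-1$ primes. Since $2^a$ is composite for $a\ge 2$, these are distinct from the $a$ primes already counted in $[2,2^a]$, so the number of primes not exceeding $2^a(n-a)$ is at least $a+(n-a-1)=n-1$; after sharpening the small-prime count to $a+1$ (that is, $p_{a+1}\le 2^a$, valid for $a\ge 3$) we obtain at least $n$ primes below $2^a(n-a)$. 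As $2^a(n-a)$ is even and exceeds $2$, it is not itself prime, so in fact $p_n<2^a(n-a)$.

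For the optimal choice of $a$ I would minimise $f(a)=2^a(n-a)$ subject to the admissibility condition $2^a>1.1\ln(2.5(n-a))$, which is exactly what guarantees that the hypothesis of Conjecture~\ref{improvebertrand} holds for $m=2^a$ and $k=n-a$. On the relevant range $a\ll n$ the factor $2^a$ dominates and $f$ is increasing, so the tightest bound comes from the smallest admissible $a$; since the admissibility inequality, once true, remains true as $a$ grows, its least solution $\alpha$ pins down this smallest value, and the extra index unit needed to pass from $p_{n-1}$ to $p_n$ shifts the optimal parameter to $a=\alpha+1$.

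The hard part will be the bookkeeping around this single-unit index shift. The bare doubling bound only certifies $a$ primes below $2^a$, which yields $p_{n-1}$ rather than $p_n$, so the proof must either strengthen this to $p_{a+1}\le 2^a$ (true for $a\ge 3$, but requiring justification beyond Proposition~\ref{bertrand} alone) or absorb the unit directly into the choice $a=\alpha+1$. Reconciling this shift with the claim that $\alpha+1$ is genuinely optimal — rather than $\alpha$, which already satisfies the admissibility condition and on small examples gives a strictly tighter valid bound — is the delicate step, and it is precisely here that the exact form of the threshold $\lceil 1.1\ln(2.5k)\rceil$ and its ceiling must be handled with care.
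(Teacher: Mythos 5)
Your derivation of the inequality $p_n < 2^a(n-a)$ is correct (conditional, as it must be, on Conjecture~\ref{improvebertrand}), but it anchors the argument differently from the paper: the paper applies Conjecture~\ref{improvebertrand} to the interval $[p_a,(n-a)p_a]$ and only afterwards replaces $p_a$ by $2^a$, whereas you apply it directly to $[2^a,\,2^a(n-a)]$ and count the small primes in $[2,2^a]$ separately. Your anchoring buys two genuine improvements. First, your admissibility condition is exactly $2^a \geq \lceil 1.1\ln(2.5(n-a))\rceil$, the inequality that defines $\alpha$ in the statement; in the paper's version the conjecture actually requires $p_a$ (not $2^a$) to clear the threshold, and since $p_a \leq 2^a$ the paper's passage from ``$p_a$ exceeds the threshold'' to ``$2^a$ exceeds the threshold'' runs in the wrong logical direction if one tries to use the latter as the sufficient condition for choosing $a$, so your version is self-consistent where the paper's is not. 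Second, you repair an off-by-one that the paper glosses over: counting $a$ primes up to $p_a$ plus the $n-a-1$ primes supplied by the conjecture only certifies $p_{n-1} < 2^a(n-a)$, and it is your strengthening $p_{a+1}\leq 2^a$ (valid for $a\geq 3$, by induction from Proposition~\ref{bertrand} with base $p_4=7<2^3$), together with the observation that the even endpoint $2^a(n-a)$ cannot itself be prime, that actually lands on $p_n$. The one clause you leave unproved --- that $a=\alpha+1$ gives the tightest bound --- is not proved in the paper either: its proof ends by asserting that the best $a$ satisfies $2^a>\lceil 1.1\ln(2.5(n-a))\rceil$, which characterizes $\alpha$ itself rather than $\alpha+1$, and your observation that $\alpha$ appears admissible and yields a strictly smaller value of $2^a(n-a)$ is well taken. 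So on the optimality clause you and the paper are in the same position, and your proposal captures (indeed, cleans up) all of the provable content of the corollary.
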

\begin{proof}
We have $p_n < p_a(n-a)$, since the interval $[p_a, p_n]$ contains 
$(n-a)$ primes. Thus, $p_n < 2^a (n-a)$. We obtain the tightest bound as 
follows. Using Conjecture~\ref{improvebertrand}, we can say that there are at 
least $n-a-1$ primes in the interval $[p_a, (n-a)p_a]$, when
$p_a > \lceil 1.1 \ln (2.5(n-a)) \rceil$. Hence, the best $a$ will satisfy
$2^a > \lceil 1.1 \ln (2.5(n-a)) \rceil$. \qed
\end{proof}

In table~\ref{tab4}, we compare the upper bounds on $n$-th prime as given by
Bertrand's Postulate and our Corollary~\ref{ub_nth_prime}.

\begin{table}[htb]
\begin{center}
\begin{tabular}{|r|r|r|r|}
\hline
$n$ & $p_n$ & Upper Bound $2^n$  & Our Upper Bound \\
\hline
32 & 131 & 4294967296 & 448 \\
\hline
987 & 7793 & 13072 \ldots (a 298-digit number) & 31424 \\
\hline
2000 & 17389 & 1148 \ldots (a 603-digit number) & 63840 \\
\hline
\end{tabular}
\end{center}
\caption{Comparing the upper bounds on $n$-th prime.}
\label{tab4}
\end{table}

We studied the number of primes in the intervals $[n, kn]$ for different
values of $n$ and $k$. We found from our observations based on plotting and 
curve-fitting that, for any particular $k$, the number of primes in the said 
interval increases almost linearly with $n$. Equations of the curves so 
obtained are roughly of the form $y=\frac{kn}{a}+bk$ where $y$ is the 
required number of primes, $a$ is close to 10 and $b$ is a number between 
1 and 10. $a$ and $b$, however, vary for different values of $n$ and $k$. By
manipulating this form, we can suggest an upper bound on the number
of primes in the interval $[n,kn]$, as follows:
\begin{conjecture}
\label{ub_numprimes}
[Our Upper Bound on the No. of Primes between $n$ and $kn$]
Given a positive integer $k$, the number of primes between $n$ and $kn$, for
any positive integer $n$, is bounded by $\frac{kn}{9}+k^2$.
\end{conjecture}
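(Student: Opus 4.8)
The plan is to bound the number of primes in $[n,kn]$ by $\Pi(kn)-\Pi(n)$ and to control both terms through Rosser and Schoenfeld's Theorem (Proposition~\ref{rosser}). For $n>17$ we have $kn\geq n>17$, so the theorem applies at both endpoints and gives
\[
\Pi(kn)-\Pi(n)\;\leq\;\frac{1.25\,kn}{\ln(kn)}-\frac{n}{\ln n}.
\]
It therefore suffices to establish the purely analytic inequality
\[
\frac{1.25\,kn}{\ln(kn)}\;\leq\;\frac{kn}{9}+k^{2}+\frac{n}{\ln n},
\]
and the entire difficulty is concentrated here. My strategy is to split the range of the product $kn$ into a large regime and a bounded transition regime, keeping the subtracted term $\frac{n}{\ln n}$ throughout, since discarding it makes the estimate too lossy for small multipliers.

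First I would dispose of the large regime $\ln(kn)\geq 11.25$, that is $kn\geq e^{11.25}\approx 76880$. There $\frac{1.25}{\ln(kn)}\leq\frac{1.25}{11.25}=\frac19$, so $\frac{1.25\,kn}{\ln(kn)}\leq\frac{kn}{9}$ already, and the terms $k^{2}$ and $\frac{n}{\ln n}$ are pure surplus. This settles every pair $(n,k)$ with $kn\geq 76880$, which in particular covers all sufficiently large $n$ for each fixed $k$, the regime in which the conjectured bound is meant to operate.

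The main obstacle is the bounded transition region $17<kn<76880$, where $\frac{1.25}{\ln(kn)}-\frac19>0$ and neither term on the right dominates by itself. Here the excess $kn\bigl(\frac{1.25}{\ln(kn)}-\frac19\bigr)$ must be absorbed by $k^{2}+\frac{n}{\ln n}$, and both corrections are genuinely needed: for large $k$ with small $n$ the quadratic $k^{2}$ does the work, whereas for the smallest multipliers such as $k=2$ the subtracted $\frac{n}{\ln n}$ is essential and the inequality becomes tight near $n\approx 50$ to $200$. I would fix the product $m=kn$, so that the left-hand side depends on $m$ alone, and minimise the right-hand side $k^{2}+\frac{m/k}{\ln(m/k)}$ over the admissible factorisations $m=kn$ with $n>17$; proving that even this minimum exceeds $\frac{1.25\,m}{\ln m}-\frac{m}{9}$ for every $m$ in the finite range reduces the claim to a one-parameter check, supplemented if necessary by direct verification over the finitely many values of $m$. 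Because the Rosser--Schoenfeld constant $1.25$ is loose while the target constant $\frac19$ is tight, the slack must be tracked carefully; a sharper explicit upper bound for $\Pi$ on this bounded range would supply additional room should the crude estimate prove insufficient.

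Finally, the case $n\leq 17$ is handled by the $k^{2}$ term. When $k\geq 17$ we have $\Pi(kn)\leq\Pi(17k)\leq 17k\leq k^{2}$, so the bound holds outright, while the finitely many pairs with $n\leq 17$ and $k\leq 16$ are confirmed by direct computation. Assembling the three regimes yields the stated bound for all positive integers $n$ and $k$.
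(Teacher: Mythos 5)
First, a framing point: the paper offers no proof of this statement at all --- it is stated as Conjecture~\ref{ub_numprimes} and defended purely empirically (Table~\ref{tab5} and the surrounding discussion), so your proposal is an attempt at something the authors themselves leave open, and it can only be judged on its own internal correctness. On that score, two of your three regimes are fine: for $kn \geq e^{11.25} \approx 76880$ the chain (number of primes in $[n,kn]$) $\leq \Pi(kn) \leq \frac{1.25\,kn}{\ln(kn)} \leq \frac{kn}{9}$ is correct, and the case $n \leq 17$, $k \geq 17$ via $\Pi(kn) \leq \Pi(17k) \leq 17k \leq k^{2}$ is also sound, as is the finite check for $n \leq 17$, $k \leq 16$.

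The genuine gap is in the transition regime, and it is not a matter of missing detail: the analytic inequality you propose to verify there,
\[
\frac{1.25\,kn}{\ln(kn)}\;\leq\;\frac{kn}{9}+k^{2}+\frac{n}{\ln n},
\]
is simply false for many pairs in that range. Take $(n,k)=(200,3)$: the left side is $750/\ln 600 \approx 117.2$, while the right side is $66.7+9+37.8 \approx 113.4$. Or $(n,k)=(500,4)$: the left side is $2500/\ln 2000 \approx 328.9$, while the right side is $222.2+16+80.5 \approx 318.7$. The conjecture itself survives at these points ($\Pi(600)-\Pi(200)=63 \leq 75.7$ and $\Pi(2000)-\Pi(500)=208 \leq 238.2$), which pinpoints the cause: in this range the Rosser--Schoenfeld constant $1.25$ overestimates $\Pi(kn)$ by more than the conjectured bound can spare, so the loss incurred in passing from the true count to $\frac{1.25\,kn}{\ln(kn)}-\frac{n}{\ln n}$ exceeds the available slack of $k^{2}$. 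Your own hedge --- that a sharper explicit bound on $\Pi$ ``would supply additional room should the crude estimate prove insufficient'' --- is therefore not a contingency but the actual situation; the one-parameter minimisation you propose over factorisations of $m=kn$ would terminate in the discovery that the inequality fails (e.g.\ at $m=600$, $k=3$ the minimum is about $46.8$, below the required $50.6$). A secondary caveat: if ``between'' is read inclusively, your opening bound must be $\Pi(kn)-\Pi(n-1)$, which costs an extra $+1$ whenever $n$ is prime, and your margins are already negative. The skeleton could still be completed, either by invoking genuinely sharper explicit estimates for $\Pi$ on the bounded range, or by replacing the failed analytic step with direct enumeration of actual prime counts over the finitely many pairs with $n>17$ and $kn<e^{11.25}$; but either repair is a different argument from the one you wrote, and as proposed the proof does not go through.
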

We defend this conjecture by briefly showing our results in Table~\ref{tab5}.
Each row corresponds to a single value of $n$ and each column corresponds to 
an individual value of $k$. Each entry in the matrix represents the 
number of primes between $n$ and $kn$ for the selected values of
$n$ and $k$. The value on top denotes the actual number of primes and
the value at bottom gives the upper bound on the number of primes as given by 
Conjecture~\ref{ub_numprimes}.

\begin{table}[htb]
\begin{center}
\begin{tabular}{|r||r|r|r|r|r|}
\hline
$n$ \ / \ $k$ & 2       & 5        & 10       & 50        & 100\\
\hline
\hline
\multirow{2}{*}{10}
& 4 & 11 & 21 & 91 & 164\\
& 6.2 & 30.6 & 111.1 & 2555.6 & 10111.1 \\
\hline
\multirow{2}{*}{50}
& 10 & 38 & 80 & 352 & 654 \\
& 15.1 & 52.8 & 155.6 & 2777.8 & 10555.6\\
\hline
\multirow{2}{*}{100}
& 21 & 70 & 143 & 644 & 1204 \\
& 26.2 & 80.6 & 211.1 & 3055.6 & 11111.1\\
\hline
\multirow{2}{*}{500}
& 73 & 272 & 574 & 2667 & 5038\\
& 115.1 & 302.8 & 655.6 & 5277.8 & 15555.6\\
\hline
\multirow{2}{*}{1000}
& 135 & 501 & 1061 & 4965 & 9424 \\
& 226.2 & 580.6 & 1211.1 & 8055.6 & 21111.1\\
\hline
\multirow{2}{*}{5000}
& 560 & 2094 & 4464 & 21375 & 40869 \\
& 1115.1 & 2802.8 & 5655.6 & 30277.8 & 65555.6 \\
\hline
\end{tabular}
\end{center}
\caption{Comparing actual no. of primes between $n$ and $kn$ with that 
predicted by Our Conjecture~\ref{ub_numprimes}.}
\label{tab5}
\end{table}

We have experimented with the intermediate values of $n$ and $k$ as
well, and found the conjecture to hold. But due to lack of space we
do not show the results here. However, we find that this bound is
quite tight only for small values $k$. As $k$ increases, the upper
bound becomes increasingly loose. However, no violation has been
detected till $k=1000000$, and with the upper bound increasing
monotonically, it is unlikely that any violation will ever occur.
Also, between $n$ and $kn$ there are $kn-n+1$ numbers. For small
values of $n$ and $k$, the upper bound of $\frac{kn}{9}+k^{2}$ may
be greater than this value, in which our conjecture obviously does
not provide any new information. We find that, the suggested upper
bound is less than $kn-n+1$ only for $n \geq \frac{9k^{2}-9}{8k-9}$.
Analyzing this graphically, we find that this approximately requires
$n \geq 2k$.

\section{Conclusion}
This paper attempts at sharpening Legendre's Conjecture and generalizing 
Bertrand's Postulate on prime numbers. We present both formal arguments
and empirical supports to defend our new conjectures and their corollaries.
Further investigation is required for attempting to prove these results.


\begin{thebibliography}{10}

\bibitem{apostol}
T. M. Apostol.
{\em Introduction to Analytical Number Theory}.
Springer-Verlag, 1989, ch. 4, pp. 82-84.

\bibitem{guy}
R. K. Guy.
{\em Unsolved Problems in Number Theory}.
Second edition, Springer-Verlag, 1994, ch. C1, pp. 105-107.

\bibitem{hardy}
G. H. Hardy and W. M. Wright. 
{\em An Introduction to the Theory of Numbers}.
Fifth edition, Oxford University Press, 1979, pp. 19 and 415-416.

\bibitem{euclid}
T. L. Heath.
{\em Euclid's Elements},
Dover, 1956, Vol. 2, Book IX. 

\bibitem{kranakis}
E. Kranakis.
{\em Primality and Cryptography}.
John Wiley \& Sons, 1986, ch. 2, pp. 39-79.

\bibitem{nagell}
T. Nagell.
{\em Introduction to Number Theory}.
Wiley, 1951, pp. 70.

\bibitem{nagura}
J. Nagura.
On the interval containing at least one prime number.
{\em Japan Academy}, Vol. 28, 1952, pp. 177-181.

\bibitem{rosser}
J. B. Rosser and L. Schoenfeld.
Approximate formulas for some functions of prime numbers.
{\em Ill. J. Math.}, 1962, pp. 64-94.

\bibitem{weisstein}
E. W. Weisstein. 
Brocard's Conjecture. MathWorld - A Wolfram Web Resource. Available at
\texttt{http://mathworld.wolfram.com/BrocardsConjecture.html} 

\end{thebibliography}
\end{document}